\newtheorem{definition}{Definition}
\newtheorem{theorem}[definition]{Theorem}
\newtheorem{remark}[definition]{Remark}
\begin{document}

\title[G\r arding Nuij Sequences]{Nuij Sequences for G\r arding Hyperbolic Polynomials}
\author[T. Hasson]{Tristan Hasson}
\address{Tristan Hasson, Department of Mathematical Sciences, University of Durham, Durham DH1 3LE, United Kingdom}
\email{tristan.hasson@durham.ac.uk}
\date{\today}
\subjclass[2010]{53C24}
\begin{abstract}
Hyperbolic polynomials were first introduced by G\r arding in 1959 in \cite{Garding}. The space of this type of polynomial was studied by Nuij and various results were given in \cite{Nuij}. Following the work of Kurdyka and Paunescu in \cite{Kurdyka}, this paper gives a result on hyperbolicity preserving operators on the space of polynomials.
\end{abstract}
\maketitle

\section*{Main Result}

We begin by giving the definition of a hyperbolic polynomial to fix notation.
\begin{definition}
Let $P(x_1, \dots, x_n)$ be a degree $m$ homogeneous polynomial on $\mathbb{R}^n$. If for some $e\in \mathbb{R}^n$ the univariate polynomial $t\mapsto P(x+te)$ has only real roots for all $x \in \mathbb{R}^n$, then we say $P$ is hyperbolic with respect to $e$.
\end{definition}
Nuij makes important use of a collection of hyperbolicity preserving operators in his paper \cite{Nuij} to prove results on the space of hyperbolic polynomials. Assume that $P$ is hyperbolic with respect to the $x_n$ direction, then these operators are of the form:
\begin{equation}
T_{k,s}P \coloneqq P+sx_k \frac{\partial P}{\partial x_n}
\end{equation}
where $s\in\mathbb{R}$ parameterises a collection of operators. In his paper Nuij combines these first order operators to form ones with higher order derivatives. For example in a 3 dimensional setting we could have
\begin{multline}
T_{x,s} T_{x,s} T_{y,s} T_{y,s} = P + s (2x+2y) \frac{\partial P}{\partial z} + s^2 (x^2 + 4xy + y^2) \frac{\partial^2 P}{\partial z^2} \\+ s^3 (2x^2y + 2xy^2) \frac{\partial^3 P}{\partial z^3} + s^4 x^2 y^2 \frac{\partial^4 P}{\partial z^4}
\end{multline}
Note this bears resemblance to the construction of Nuij sequences from \cite{Kurdyka}. This motivates the following definition in the setting of G\r arding's hyperbolic polynomials.

\begin{definition}
Given any $P(x_1, \dots, x_n)$, a polynomial of degree $m$ hyperbolic with respect to the $x_n$ direction, a sequence $a=(a_1, \dots, a_m)$ of polynomials, where $a_k \in \mathbb{R}_k[x_1, \dots, x_{n-1}]$, is a G\r arding Nuij sequence if the polynomial
\begin{equation}
P_{a,s}(x_1, \dots, x_n) \coloneqq P(x_1, \dots, x_n) + \sum_{k=1}^m a_k s^k \frac{\partial^k P(x_1, \dots, x_n)}{\partial x_n^k}
\end{equation}
is hyperbolic with respect to $x_n$ for any $s \in \mathbb{R}$.
\end{definition}

We now prove the following theorem giving a condition for any sequence of homogenisation coefficients to give rise to a hyperbolicity preserving operator.

\begin{theorem}
A sequence $a=(a_1, \dots, a_m)$ where $a_k \in \mathbb{R}_k[x_1, \dots, x_{n-1}]$ is a G\r arding Nuij sequence if and only if
\begin{equation}
Q_a(x_1, \dots, x_n) = x_n^m + \sum_{k=1}^m a_k \frac{\partial^k(x_n^m)}{\partial x_n^k}
\end{equation}
is a hyperbolic polynomial.
\label{Main}
\end{theorem}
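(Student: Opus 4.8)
The plan is to freeze the transverse variables and reduce the statement to a one-variable question about constant-coefficient differential operators. A homogeneous degree-$m$ polynomial $R$ is hyperbolic with respect to $x_n$ exactly when, for every fixed $x'=(x_1,\dots,x_{n-1})$, the univariate polynomial $x_n\mapsto R(x',x_n)$ has only real roots. So I would fix $x'$ and $s$, set $c_0\coloneqq 1$ and $c_k\coloneqq a_k(x')s^k$, and introduce the operator $\mathcal{D}=\sum_{k=0}^m c_k\,\partial_{x_n}^k$, for which $P_{a,s}(x',\cdot)=\mathcal{D}\big[P(x',\cdot)\big]$. The problem then becomes: when does $\mathcal{D}$ send real-rooted degree-$m$ polynomials to real-rooted polynomials? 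The bridge to $Q_a$ is homogeneity: since $a_k$ is homogeneous of degree $k$ we have $a_k(x')s^k=a_k(sx')$, hence $\mathcal{D}[x_n^m]=x_n^m+\sum_{k\ge1}a_k(sx')\frac{m!}{(m-k)!}x_n^{m-k}=Q_a(sx',x_n)$. Thus the action of $\mathcal{D}$ on the distinguished real-rooted polynomial $x_n^m$ is precisely what $Q_a$ records.

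For necessity I would use a scaling limit that handles an arbitrary fixed hyperbolic $P$ in one stroke. Let $\beta$ be the coefficient of $x_n^m$ in $P$, a nonzero constant (as is standard, I assume the $x_n$ direction is genuinely transverse, so this top coefficient does not vanish). Substituting $x_n=s\eta$ and dividing by $s^m$ produces the family $\eta\mapsto s^{-m}P_{a,s}(x',s\eta)$, which is real-rooted in $\eta$ for every $s$, has constant leading coefficient $\beta$, and converges coefficientwise as $s\to\infty$ to $\beta\,Q_a(x',\eta)$; the check here is that all lower powers of $s$ drop out and the coefficient of $s^m$ is exactly $\beta Q_a(x',\cdot)$. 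Because the limit has degree $m$ (no roots escape to infinity, the leading coefficient being the fixed constant $\beta$), Hurwitz's theorem forces it to be real-rooted, so $Q_a(x',\cdot)$ is real-rooted for every $x'$ and $Q_a$ is hyperbolic. Under the alternative reading in which $a$ must work for every hyperbolic $P$, necessity is instead immediate: take $P=x_n^m$ and $s=1$, giving $P_{a,1}=Q_a$.

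For sufficiency the crux is the one-variable lemma that a constant-coefficient operator $\mathcal{D}=\sum_k c_k\partial^k$ maps every real-rooted degree-$m$ polynomial to a real-rooted polynomial if and only if its symbol $\mathcal{D}(x^m)=\sum_k c_k\frac{m!}{(m-k)!}x^{m-k}$ is real-rooted. Granting this, hyperbolicity of $Q_a$ yields that $\mathcal{D}[x_n^m]=Q_a(sx',\cdot)$ is real-rooted for all $x'$ and $s$, so $\mathcal{D}$ preserves real-rootedness, whence $P_{a,s}(x',\cdot)=\mathcal{D}[P(x',\cdot)]$ is real-rooted for every $x'$ and $P_{a,s}$ is hyperbolic.

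I expect this lemma to be the main obstacle, and I would emphasise that it is a genuinely \emph{bounded-degree} phenomenon. The classical Hermite--Poulain theorem only guarantees that $\mathcal{D}=\phi(\partial)$ preserves real-rootedness when the full symbol $\phi(z)=\sum_k c_k z^k$ is real-rooted, and this is strictly stronger than needed: for $m=2$ it demands $c_1^2\ge 4c_2$, whereas $\mathcal{D}(x^2)$ is already real-rooted when $c_1^2\ge 2c_2$. To obtain the sharp criterion I would invoke the characterization of hyperbolicity preservers on polynomials of degree at most $m$, by which a linear operator $T$ preserves hyperbolicity exactly when its symbol $T\big[(x+y)^m\big]$ is a hyperbolic bivariate polynomial. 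For our $\mathcal{D}$ this symbol equals $\psi(x+y)$ with $\psi=\mathcal{D}(x^m)$, whose zero set is the family of parallel lines $x+y=\rho_i$; this is hyperbolic precisely when every root $\rho_i$ of $\psi$ is real, i.e.\ exactly when $\mathcal{D}(x^m)$ is real-rooted. Verifying or citing this symbol criterion, together with dispatching the degenerate low-rank cases (which do not arise here, since $\mathcal{D}=I+N$ with $N$ nilpotent is invertible on $\mathbb{R}_m[x]$), is where the real work lies; a self-contained alternative is to run Grace's apolarity theorem on $\psi$ against the polynomial being differentiated.
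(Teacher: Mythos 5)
Your proposal is correct, and its engine is the same as the paper's: freeze the transverse variables $x'$, regard the operator as a univariate constant-coefficient operator on $\mathbb{C}_m[z]$, compute its Borcea--Br\"and\'en symbol, observe that the symbol equals $\psi(z+w)$ where $\psi$ is the relevant restriction of $Q_a$, and invoke the symbol criterion (Theorem \ref{Stab}) to conclude that real-rootedness is preserved line by line; your identity $\partial_x^k (x+y)^m = \tfrac{m!}{(m-k)!}(x+y)^{m-k}$ is simply a cleaner route to the paper's computation $T_{a,v}((z+w)^d)=Q_a(z+w)$, which it carries out by expanding and matching binomial coefficients. However, two points in your write-up genuinely go beyond the paper's proof. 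First, by using homogeneity of $a_k$ to write $a_k(x')s^k=a_k(sx')$, you absorb the parameter $s$ into the argument of $Q_a$, so your argument covers arbitrary $s\in\mathbb{R}$; the paper proves only the case $s=1$ and explicitly defers the general case in its closing remark, so your observation closes that gap. Second, your Hurwitz scaling-limit argument establishes necessity for a single fixed hyperbolic $P$ (assuming $P(e_n)\neq 0$), whereas the paper's necessity direction depends on reading the definition as quantified over all hyperbolic $P$, so that it may simply take $P=x_n^m$ with $s=1$ (your proposal notes this easy route too). One small simplification available to you: the low-rank degenerate cases in the Borcea--Br\"and\'en characterization only enter the converse implication (preserver $\Rightarrow$ stable symbol); the direction you actually use --- stable symbol $\Rightarrow$ preserver --- holds without any caveat, so the invertibility remark about $\mathcal{D}=I+N$, while correct, is not needed.
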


The proof of this theorem makes use of a result of \cite{Borcea_2009}. We must first give the definition of a stable polynomial before giving the statement of this result.

\begin{definition}
Let $p\in \mathbb{C}[z_1, \dots, z_n]$ be a degree $m$ polynomial, if for all $(z_1, \dots, z_n) \in \mathbb{C}^n$ with $Im(z_i)>0$ $1\leq i \leq n$ we have $p(z_1, \dots, z_n)\neq 0$ then we say $p$ is stable.
\end{definition}

\begin{theorem}
Let $T: \mathbb{C}_m[z] \rightarrow \mathbb{C}_m[z]$ be a linear map. We extend $T$ to a linear map $\bar{T}: \mathbb{C}_m[z,w]\rightarrow \mathbb{C}_m[z,w]$ by setting $\bar{T}(z^kw^l)\coloneqq T(z^k)w^l$. If $\bar{T}((z+w)^d)$ is a stable polynomial in two variables, then $T$ preserves stability.
\label{Stab}
\end{theorem}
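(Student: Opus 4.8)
The plan is to treat the symbol $G(z,w):=\bar{T}((z+w)^d)=\sum_{k=0}^d\binom{d}{k}T(z^k)\,w^{d-k}$ as a complete record of the operator $T$, and to recover the value of $T$ on an individual stable polynomial by substituting the negated roots of that polynomial into a multiaffine version of $G$. Before doing this I would reduce to the generic case. By a routine application of Hurwitz's theorem it suffices to show that $T(p)$ is stable whenever $p$ has degree exactly $d$ and all of its roots lie in the open lower half-plane: an arbitrary stable $p$ of degree at most $d$ is a locally uniform limit of such polynomials, $T$ is continuous because it is linear on a finite-dimensional space, and a locally uniform limit of polynomials that are zero-free on the open upper half-plane is either zero-free there or identically zero. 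In either case the limit is stable.

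The central step is a polarization in the variable $w$. Replacing the single variable $w$ by $d$ new variables $w_1,\dots,w_d$ through the symmetric multiaffine polarization operator turns $G$ into
\[
\widehat G(z,w_1,\dots,w_d)=\sum_{k=0}^d T(z^k)\,e_{d-k}(w_1,\dots,w_d),
\]
where $e_j$ denotes the $j$-th elementary symmetric polynomial and $\widehat G$ restricts to $G$ along the diagonal $w_1=\dots=w_d=w$. The key input here is the classical Grace--Walsh--Szeg\H{o} coincidence theorem, which ensures that polarizing a single variable of a stable polynomial again yields a stable polynomial; thus $\widehat G$ is nonzero whenever $\operatorname{Im}(z)>0$ and $\operatorname{Im}(w_i)>0$ for every $i$.

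Now I would read $T(p)$ off of $\widehat G$. Writing $p(z)=a_d\prod_{i=1}^d(z-r_i)$, the coefficients of $p$ are $a_k=a_d(-1)^{d-k}e_{d-k}(r_1,\dots,r_d)$, so the substitution $w_i=-r_i$ collapses the elementary symmetric functions in $\widehat G$ exactly into the coefficients of $p$:
\[
\widehat G(z,-r_1,\dots,-r_d)=\sum_{k=0}^d a_d^{-1}a_k\,T(z^k)=a_d^{-1}\,T(p)(z).
\]
Because we arranged every $r_i$ to lie in the open lower half-plane, each $-r_i$ lies in the open upper half-plane, and stability of $\widehat G$ forces $\widehat G(z,-r_1,\dots,-r_d)\neq0$ for all $z$ with $\operatorname{Im}(z)>0$. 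Hence $T(p)$ is zero-free on the upper half-plane, i.e. stable, and combined with the reduction above this proves the theorem.

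I expect the polarization to be the main obstacle. The substitution $w_i=-r_i$ is elementary once $\widehat G$ is in hand, but the passage from the two-variable symbol $G$ to its multiaffine lift $\widehat G$ is precisely where the Grace--Walsh--Szeg\H{o} theorem is indispensable, and some care is needed to justify polarizing only the variable $w$ while carrying $z$ along as a parameter. The remaining delicate points are bookkeeping ones: the substitution is legitimate only because the reduction places each $-r_i$ in the open upper half-plane rather than merely its closure, so one must track the distinction between open and closed half-planes throughout and separately dispose of the degenerate possibility that $T(p)$ vanishes identically.
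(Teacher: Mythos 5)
The paper offers no proof of Theorem \ref{Stab} to compare against: it is imported wholesale from \cite{Borcea_2009} and used as a black box in the proof of Theorem \ref{Main}. What you have written is, in essence, the original Borcea--Br\"and\'en argument for this direction of their characterisation of stability preservers, and it is correct. The polarisation formula is right: the factor $\binom{d}{k}$ in $G(z,w)=\sum_{k}\binom{d}{k}T(z^k)w^{d-k}$ cancels against the denominator $\binom{d}{d-k}$ introduced by polarising $w^{d-k}$, leaving $\widehat G=\sum_k T(z^k)\,e_{d-k}(w_1,\dots,w_d)$. Applying Grace--Walsh--Szeg\H{o} in the $w_i$ alone, with $z$ carried as a parameter, is legitimate exactly as you say: for each fixed $z$ in the open upper half-plane one polarises the univariate stable polynomial $w\mapsto G(z,w)$, and since the upper half-plane is a convex circular region the coincidence theorem applies, so a zero of $\widehat G$ with all $w_i$ in the upper half-plane would force a zero of $G$ there. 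The evaluation $w_i=-r_i$ then turns $e_{d-k}$ into $a_k/a_d$ and recovers $a_d^{-1}T(p)$ by linearity, and the Hurwitz reduction (approximate a stable $p$ of degree $e\le d$ by $p(z+i\varepsilon)(1-i\varepsilon z)^{d-e}$, say) is routine as claimed.

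One caveat you raise in passing deserves to be stated as a fact rather than a worry: the zero-polynomial case is unavoidable, and with the paper's strict definition of stability (under which the zero polynomial is not stable) the theorem as stated is literally false. Take $d=1$ and $T=d/dz$, so $T(1)=0$ and $T(z)=1$: the symbol $\bar T(z+w)=1$ is stable, yet $T$ sends the stable polynomial $1$ to $0$. The correct conclusion, and the convention in \cite{Borcea_2009}, is that $T$ maps stable polynomials to polynomials that are either stable or identically zero; your own argument localises where the degeneracy can occur, namely only in the Hurwitz limit and never for $p$ of degree exactly $d$ with all roots in the open lower half-plane, since $\widehat G$ is nonvanishing at $(z,-r_1,\dots,-r_d)$. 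This caveat propagates harmlessly into the paper's use of Theorem \ref{Stab}, but it should be recorded.
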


We can now prove our main result. This proof is based on a proof from \cite{Kurdyka}.

\begin{proof}[Proof of Theorem \ref{Main}]
We begin with the easy direction, if $a$ is a G\r arding Nuij sequence then $Q_a$ is a hyperbolic polynomial. Now $Q_a$ is simply the Nuij sequence construction acting upon the polynomial $x_n^m$, so since $P=x_n^m$ is hyperbolic with respect to $x_n$, we have $Q_a$ is hyperbolic.

This leaves us to prove: if $Q_a$ is a hyperbolic polynomial, then $a$ is a G\r arding Nuij sequence. Define a map
\begin{equation}
T_a(P)(x_1, \dots, x_n) \coloneqq P(x_1, \dots, x_n) + \sum_{k=1}^m a_k \frac{\partial^kP(x_1, \dots, x_n)}{\partial x_n^k}.
\end{equation}
We need to show that if $Q_a$ is hyperbolic then $T_a$ preserves hyperbolicity.

Now, $T_a$ preserves hyperbolicity if for all $v=(v_1, \dots, v_n)$ fixed, the univariate polynomial given by restricting $T_a(P)$ to the line through $v$ in the direction $x_n$ has only real roots. Denote $T_a(P)$ restricted to this line by
\begin{equation}
T_{a_v}(P_v)=T(P)(v_1, \dots, v_{n-1}, x_n),
\end{equation}
so $T_{a,v}:\mathbb{C}_m[z]\rightarrow \mathbb{C}_m[z]$ is a linear map.

Explicitly we have
\begin{equation}
T_{a,v}(P-v)(x_n) = P_v(x_n) + \sum_{k=1}^m a_k(v_1, \dots, v_{n-1}) \frac{dP_v (x_n)}{dx_n}
\end{equation}
and since the $v_1, \dots, v_{n-1}$ are fixed, so is $a_k(v_1, \dots, v_{n-1})$ which we will denote $a_{k,v}$.

To show $T_a$ preserves hyperbolicity we will show $T_{a,v}((z+w)^d)=Q_a(z+w)$, to which we will be able to apply the result of Theorem \ref{Stab}.

First consider $T_{a,v}$,
\begin{equation}
\begin{split}
T_{a,v}((z+w)^d) &= T_{a,v} \Big( \sum_{i=0}^d \binom{d}{i} z^i w^{d-i} \Big) \\
&=\sum_{i=0}^d \binom{d}{i} w^{d-i} T_{a,v}(z^i).
\end{split}
\end{equation}
Now note that
\begin{equation}
T_{a,v}(z^i) = \sum_{j=0}^i a_{j,v} (z^i)^{(j)} = \sum_{j=0}^i a_{j,v} \frac{i!}{(i-j)!} z^{i-j},
\end{equation}
so plugging this into the above we obtain
\begin{multline}
\sum_{i=0}^d \binom{d}{i} w^{d-i} \Big( \sum_{j=0}^i a_{j,v} \frac{i!}{(i-j)!} z^{i-j} \Big) \\
= \sum_{i=0}^d \binom{d!}{(d-i)!i!} w^{d-i} \Big( \sum_{j=0}^i a_{j,v} \frac{i!}{(i-j)!} z^{i-j} \Big).
\end{multline}

Now we can pick out the expression that comes with each $a_{j,v}$,
\begin{equation}
\sum_{i=j}^d \frac{d!}{(d-i)!i!} \frac{i!}{(i-j)!} z^{i-j} w^{d-i}
\end{equation}
and relabelling the index gives
\begin{equation}
\sum_{k=0}^{d-j} \frac{d!}{(d-j-k)!k!} z^k w^{d-j-k}.
\end{equation}

Now we consider $Q_a(z+w)$, by definition we have
\begin{equation}
Q_a(z+w) = \sum_{i=0}^d \frac{d!}{(d-i)!} a_{i,v} (z+w)^{d-i}.
\end{equation}
The coefficient of $a_j$ here is
\begin{equation}
\begin{split}
\frac{d!}{(d-j)!} (z+w)^{d-j} &= \frac{d!}{(d-j)!} \sum_{k=0}^{d-j} \binom{d-j}{k} z^k w^{d-j-k} \\
&= \frac{d!}{(d-j)!} \sum_{k=0}^{d-j} \frac{(d-j)!}{(d-j-k)!k!} z^k w^{d-j-k} \\
&= \sum_{k=0}^{d-j} \frac{d!}{(d-j-k)!k!} z^k w^{d-j-k}.
\end{split}
\end{equation}
This is the same expression for the $a_{j,v}$ as in $T_{a,v}$ hence $T_{a,v}((z+w)^d)=Q_a(z+w)$.

Now since $Q_a(x_n)$ has only real roots, we have that $Q_a(x_n,w)$ is a stable polynomial in two variables. Hence since $T_{a,v}(x_n,w)=Q_a(x_n,w)$, we have $T_{a,v}$ preserves stability. Restricting to real polynomials, $\mathbb{R}_m[x_1, \dots, x_n]$, we have that $T_{a,v}$ preserves real-rootedness. So for all $v\in\mathbb{R}^n$ we have if $P$ restricted to the line through $v$ in the direction $x_n$ has only real roots, then $T(P)$ restricted to the same line also has only real roots. Hence $T$ preserves G\r arding hyperbolicity and $a$ is a G\r arding Nuij sequence.
\end{proof}

\begin{remark}
Note in this proof we set $s=1$ in the definition of the G\r arding Nuij sequence. A little more work is required to prove the theorem in generality.
\end{remark}

\bibliographystyle{amsplain}
\bibliography{BibFile}{}

\providecommand{\bysame}{\leavevmode\hbox to3em{\hrulefill}\thinspace}
\providecommand{\MR}{\relax\ifhmode\unskip\space\fi MR }
% \MRhref is called by the amsart/book/proc definition of \MR.
\providecommand{\MRhref}[2]{%
  \href{http://www.ams.org/mathscinet-getitem?mr=#1}{#2}
}
\providecommand{\href}[2]{#2}
\begin{thebibliography}{1}

\bibitem{Borcea_2009}
Julius Borcea and Petter Br\"and\'en, \emph{P\' olya-{S}chur master theorems
  for circular domains and their boundaries}, Annals of Mathematics
  \textbf{170} (2009), no.~1, 465–492.

\bibitem{Garding}
L.~G{\r a}rding, \emph{An inequality for hyperbolic polynomials}, Journal of
  Mathematics and Mechanics \textbf{8 (6)} (1959), 957--965.

\bibitem{Kurdyka}
Krzysztof Kurdyka and Laurentiu Paunescu, \emph{Nuij type pencils of hyperbolic
  polynomials}, Canadian Mathematical Bulletin \textbf{60} (2017), no.~3,
  561–570.

\bibitem{Nuij}
Wim Nuij, \emph{A note on hyperbolic polynomials}, Mathematica Scandinavica
  \textbf{23} (1968), no.~1, 69--72.

\end{thebibliography}

\end{document}